\newtheorem{theorem}{Theorem}[section]
\newtheorem{corollary}[theorem]{Corollary}
\numberwithin{equation}{section}
\begin{document}

\title[Universal bounds for the Hardy--Littlewood inequalities on multilinear forms]{Universal bounds for the Hardy--Littlewood inequalities on multilinear forms}

\author[Ara\'{u}jo]{G. Ara\'{u}jo}
\address[G. Ara\'{u}jo]{Universidade Estadual da Para\'{\i}ba, Campina Grande - PB (58.429-500), Brazil}
\email{gdasaraujo@gmail.com}

\author[C\^amara]{K. C\^amara}
\address[K. C\^amara]{Universidade Federal da Para\'{\i}ba and Universidade Federal Rural do Semi-\'Arido, Jo\~{a}o Pessoa - PB (58.051-900) and Mossor\'o - RN (59.625-900), Brazil}
\email{kleber.soares@ufersa.edu.br}

\subjclass[2010]{46G25, 47H60 (primary), 47A63, 41A44, 34C11 (secondary)}

\keywords{Absolutely summing operators, Constants, Hardy--Littlewood inequalities}

\begin{abstract}
The Hardy--Littlewood inequalities for multilinear forms on sequence spaces state that for all positive integers $m,n\geq2$ and all $m$-linear forms $T:\ell_{p_{1}}^{n}\times\cdots\times\ell_{p_{m}}^{n}\rightarrow\mathbb{K}$ ($\mathbb{K}=\mathbb{R}$ or $\mathbb{C}$) there are constants $C_{m}\geq1$ (not depending on $n$) such that
\[
\left(  \sum_{j_{1},\ldots,j_{m}=1}^{n}\left\vert T(e_{j_{1}},\ldots,e_{j_{m}})\right\vert ^{\rho}\right)  ^{\frac{1}{\rho}}\leq C_{m}\sup_{\left\Vert x_{1}\right\Vert ,\dots,\left\Vert x_{m}\right\Vert \leq 1}\left\vert T(x_{1},\dots,x_{m})\right\vert,
\]
where $\rho=\frac{2m}{m+1-2\left(  \frac{1}{p_{1}}+\cdots+\frac{1}{p_{m}}\right)  }$ if $0\leq\frac{1}{p_{1}}+\cdots+\frac{1}{p_{m}}\leq\frac{1}{2}$ or $\rho=\frac{1}{1-\left(  \frac{1}{p_{1}}+\cdots+\frac{1}{p_{m}}\right)}$ if $\frac{1}{2}\leq\frac{1}{p_{1}}+\cdots+\frac{1}{p_{m}}<1$. Good estimates for the Hardy-Littlewood constants are, in general, associated to applications in Mathematics and even in Physics, but the exact behavior of these constants is still unknown. In this note we give some new contributions to the behavior of the constants in the case $\frac{1}{2}\leq\frac{1}{p_{1}}+\cdots+\frac{1}{p_{m}}<1$. As a consequence of our main result, we present a generalization and a simplified proof of a result due to Aron et al. on certain Hardy--Littlewood type inequalities.
\end{abstract}

\maketitle

\section{Introduction}

Let $E,E_{1},...,E_{m}$ and $F$ be Banach spaces over $\mathbb{K}=\mathbb{R}$
or $\mathbb{C}$ and for all $m$-linear maps $T:E_{1}\times\cdots\times E_{m}
\to F$ let us denote
\[
\left\|  T\right\|  := \sup_{ \left\Vert x_{1}\right\Vert , \dots, \left\Vert
	x_{m}\right\Vert \leq1} \left\Vert T(x_{1}, \dots, x_{m})\right\Vert .
\]
Also, let $c_{0}=\left\{  \left(  x_{n}\right)  _{n=1}^{\infty}\subset
\mathbb{K}:\lim x_{n}=0\right\}  $. \textit{Littlewood's $4/3$ inequality}
\cite{LLL} (1930) asserts that
\[
\left(  \sum_{j,k=1}^{\infty}\left|  T(e_{j},e_{k})\right|  ^{\frac{4}{3}%
}\right)  ^{\frac{3}{4}}\leq\sqrt{2}\|T\|,
\]
for all continuous bilinear forms $T \colon c_{0}\times c_{0} \to\mathbb{C}$,
and the exponent $4/3$ is sharp.
%In some sense this result is the starting point of the theory of multiple summing operators (for recent results on summing operators we refer to \cite{ach, bayart2, bu, delgado} and the references therein).

Littlewood's $4/3$ inequality was the starting point of several important
inequalities, such as an inequality due to Bohnenblust and Hille (1931), which
nowadays is known to be important for applications in physics (see
\cite{Montanaro}). The \textit{Bohnenblust--Hille inequality} \cite{bh}
assures the existence of a constant $B_{m}\geq1$ such that
\[
\left(  \sum_{j_{1},\ldots,j_{m}=1}^{\infty}\left\vert T(e_{j_{1}}%
,\ldots,e_{j_{m}})\right\vert ^{\frac{2m}{m+1}}\right)  ^{\frac{m+1}{2m}}\leq
B_{m}\left\Vert T\right\Vert ,
\]
for all continuous $m$--linear forms $T\colon c_{0}\times\cdots\times c_{0}
\to\mathbb{C}$.

Of course, if $m=2$ we recover Littlewood's $4/3$ inequality. In 1934 Hardy
and Littlewood \cite{hl} extended Littlewood's $4/3$ inequality to bilinear
maps defined on $\ell_{p}\times\ell_{q}$, where by $\ell_{s}$, $s\geq1$, we
mean the Banach space of all absolutely $s$--summable sequences in
$\mathbb{K}$ (of course, if $s=\infty$ by $\ell_{\infty}$ we mean the space of
all bounded sequences in $\mathbb{K}$). In 1981, Praciano-Pereira \cite{pra}
extended the Hardy--Littlewood inequalities to $m$-linear forms on
$\ell_{p_{1}}\times\cdots\times\ell_{p_{m}}$ for $0\leq\frac{1}{p_{1}}%
+\cdots+\frac{1}{p_{m}}\leq\frac{1}{2}$ and very recently Dimant and
Sevilla-Peris \cite{dimant} generalized the estimates for the case $\frac
{1}{2}\leq\frac{1}{p_{1}}+\cdots+\frac{1}{p_{m}}<1$ (all these inequalities
are nowadays called \textit{Hardy--Littlewood inequalities}).

From now on, for any function $f$, whenever it makes sense we formally define
$f(\infty)=\lim_{p\rightarrow\infty}f(p)$. Moreover, for $\mathbf{p}%
=(p_{1},\dots,p_{m})\in[1,\infty]^{m}$ and $1\leq k\leq m$, let us denote
\[
\left|  \frac{1}{\mathbf{p}}\right|  _{\leq k}:=\frac{1}{p_{1}}+\dots+\frac
{1}{p_{k}}, \quad\left|  \frac{1}{\mathbf{p}}\right|  _{\geq k}:=\frac
{1}{p_{k}}+\dots+\frac{1}{p_{m}} \quad\text{and} \quad\left|  \frac
{1}{\mathbf{p}} \right|  := \left|  \frac{1}{\mathbf{p}} \right|  _{\leq
	m}=\left|  \frac{1}{\mathbf{p}} \right|  _{\geq1}
\]
and, as usual, for $s\in[1,\infty]$ and a positive integer $n$ we define
$\ell_{s}^{n}=\mathbb{K}^{n}$ equipped with the $\ell_s$-norm ($\sup$ norm if
$s=\infty$); also, $e_{j}$ represents the canonical vector of $c_{0}$ with $1$
in the $j$-th coordinate and $0$ elsewhere.

The classical Hardy--Littlewood inequalities can be stated as follows:

\begin{theorem}
	[Bohnenblust, Dimant, Hardy, Hille, Littlewood, Praciano-Pereira,
	Sevilla-Perez]Let $m\geq2$ be a positive integer and $\mathbf{p}=(p_{1}%
	,\ldots,p_{m})\in(1,\infty]^{m}$ with $0\leq\left|  \frac{1}{\mathbf{p}%
	}\right|  <1$. Then there are constants $C_{m,\mathbf{p}}^{\mathbb{K}}\geq1$
	such that
	\begin{align}
	&  \left(  \sum_{j_{1},\ldots,j_{m}=1}^{n}\left\vert T(e_{j_{1}}%
	,\ldots,e_{j_{m}})\right\vert ^{\frac{2m}{m+1-2\left|  \frac{1}{\mathbf{p}%
			}\right|  }}\right)  ^{\frac{m+1-2\left|  \frac{1}{\mathbf{p}}\right|  }{2m}%
	}\leq C_{m,\mathbf{p}}^{\mathbb{K}}\left\Vert T\right\Vert \text{ if }
	0\leq\left|  \frac{1}{\mathbf{p}}\right|  \leq\frac{1}{2},\label{hl1}\\
	&  \left(  \sum_{j_{1},\ldots,j_{m}=1}^{n}\left\vert T\left(  e_{j_{1}}%
	,\ldots,e_{j_{m}}\right)  \right\vert ^{\frac{1}{1-\left|  \frac{1}%
			{\mathbf{p}}\right|  }}\right)  ^{1-\left|  \frac{1}{\mathbf{p}}\right|  }\leq
	C_{m,\mathbf{p}}^{\mathbb{K}}\left\Vert T\right\Vert \text{ if }\frac{1}%
	{2}\leq\left|  \frac{1}{\mathbf{p}}\right|  <1, \label{hl2}%
	\end{align}
	for all $m$-linear forms $T:\ell_{p_{1}}^{n}\times\cdots\times\ell_{p_{m}}%
	^{n}\rightarrow\mathbb{K}$ and all positive integers $n$.
	%In particular, if $m<p\leq \infty$, then there are constants $C_{m,p}^{\mathbb{K}}\geq 1$ such that
	%\begin{align*}
	%&\left( \sum_{j_{1},\ldots ,j_{m}=1}^{n}\left\vert T(e_{j_{1}},\ldots,e_{j_{m}})\right\vert ^{\frac{2mp}{mp+p-2m}}\right) ^{\frac{mp+p-2m}{2mp}}\leq C_{m,p}^{\mathbb{K}}\left\Vert T\right\Vert \text{ if } 2m \leq p\leq \infty, \\
	%& \left( \sum_{j_{1},\ldots,j_{m}=1}^{n}\left\vert T\left( e_{j_{1}},\ldots,e_{j_{m}}\right) \right\vert ^{\frac{p}{p-m}}\right) ^{\frac{p-m}{p}}\leq C_{m,p}^{\mathbb{K}}\left\Vert T\right\Vert \text{ if } m<p \leq 2m,
	%\end{align*}
	%for all $m$-linear forms $T:\ell _{p}^{n}\times \cdots \times \ell _{p}^{n}\rightarrow \mathbb{K}$ and for all positive integers $n$.
	
\end{theorem}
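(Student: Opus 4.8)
The plan is to derive both \eqref{hl1} and \eqref{hl2} from \emph{mixed-norm} (anisotropic) inequalities together with a concluding interpolation, the two workhorses being the multiple Khinchin inequality and the self-duality of $\ell_p$. Concretely, for an $m$-linear $S\colon\ell_\infty^n\times\cdots\times\ell_\infty^n\to\mathbb K$ the multiple Khinchin inequality gives $\big(\sum_{j_1,\dots,j_m=1}^n|S(e_{j_1},\dots,e_{j_m})|^{2}\big)^{1/2}\le A^{m}\|S\|$ with $A$ absolute; and for a linear functional $\varphi\colon\ell_p^n\to\mathbb K$ one has $\big(\sum_{j}|\varphi(e_j)|^{p^{*}}\big)^{1/p^{*}}=\|\varphi\|$ (with $1/p+1/p^{*}=1$), so that when $p\ge2$, i.e. $p^{*}\le2$, a fortiori $\big(\sum_j|\varphi(e_j)|^{2}\big)^{1/2}\le\|\varphi\|$.

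I would treat \eqref{hl1} first, noting that $0\le|\frac1{\mathbf p}|\le\frac12$ forces every $p_k\ge2$. The base case $m=2$ is Hardy and Littlewood's bilinear inequality; its engine is Littlewood's mixed $(\ell_1,\ell_2)$-estimate
\[
\sum_{j=1}^n\Big(\sum_{k=1}^n|T(e_j,e_k)|^{2}\Big)^{1/2}\le\sqrt2\,\|T\|\qquad(T\colon c_0\times c_0\to\mathbb K),
\]
which follows by writing the inner $\ell_2$-norm as $\big(\mathbb E_\delta|T(e_j,\delta)|^{2}\big)^{1/2}$, passing to $\sqrt2\,\mathbb E_\delta|T(e_j,\delta)|$ via Khinchin, summing in $j$, and using $\sum_j|T(e_j,\delta)|=\|T(\cdot,\delta)\|\le\|T\|$; feeding in the duality step for the $\ell_p$-variables and reorganizing by Minkowski's inequality upgrades this to the bilinear Hardy--Littlewood inequality in anisotropic form. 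For general $m$ I would peel off one variable, apply Khinchin or duality to it according as its space is $\ell_\infty$ or $\ell_p$ with $p<\infty$, and use Minkowski to move the new outer sum inward; iterating yields, for each $k\in\{1,\dots,m\}$, an estimate of the shape
\[
\Big(\sum_{j_k=1}^n\Big(\sum_{\widehat{\jmath_k}}|T(e_{j_1},\dots,e_{j_m})|^{2}\Big)^{\lambda_k/2}\Big)^{1/\lambda_k}\le C_{m,\mathbf p}\,\|T\|,
\]
with $\lambda_k$ the exponent dictated by the Khinchin/duality budget and $\widehat{\jmath_k}$ the multi-index omitting $j_k$. A single application of Blei's interpolation inequality to the $m$ inequalities so obtained then collapses them onto the diagonal exponent $\rho=\frac{2m}{m+1-2|\frac1{\mathbf p}|}$, with a constant depending only on $m$ and $\mathbf p$, which is \eqref{hl1}.

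For \eqref{hl2} the regime $\frac12\le|\frac1{\mathbf p}|<1$ allows some $p_k<2$, and then the convenient $\ell_2$-step for that variable is replaced by the weaker $\ell_{p_k^{*}}$-estimate with $p_k^{*}>2$. The overall strategy is unchanged --- manufacture the relevant anisotropic inequalities and interpolate --- but the admissible tuples, and hence the bookkeeping, differ, the upshot being that the diagonal exponent now comes out to $\frac1{1-|\frac1{\mathbf p}|}$ (consistent with the previous case at the crossover $|\frac1{\mathbf p}|=\frac12$). Making this precise is exactly the Dimant--Sevilla-Peris argument; alternatively one may bootstrap from the endpoint case $|\frac1{\mathbf p}|=\frac12$ of \eqref{hl1} by a Hölder step in the ``large-$1/p_k$'' directions.

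The manipulations with Khinchin, Cauchy--Schwarz and Minkowski, and the verification that the interpolation exponents match, are routine. The genuine difficulty lies in the range $|\frac1{\mathbf p}|\ge\frac12$: the target exponent $\frac1{1-|\frac1{\mathbf p}|}$ is not of Bohnenblust--Hille type, a naive induction on $m$ fails to close, and the anisotropic inequalities together with their interpolation must be set up so that the contribution of every variable with $p_k<2$ --- affording only $\ell_{p_k^{*}}$ with $p_k^{*}>2$, strictly weaker than $\ell_2$ --- is balanced correctly against the $\ell_2$-contributions of the remaining variables.
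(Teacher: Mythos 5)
First, a point of reference: the paper does not prove this theorem at all --- it is stated as classical background and attributed to the cited sources (Littlewood \cite{LLL}, Bohnenblust--Hille \cite{bh}, Hardy--Littlewood \cite{hl}, Praciano-Pereira \cite{pra}, Dimant--Sevilla-Peris \cite{dimant}). So there is no in-paper proof to compare against, and your attempt has to be judged against the literature arguments. For \eqref{hl1} your sketch is the standard route and is sound in outline: all $p_k\geq 2$ in that regime, one builds the $m$ anisotropic estimates with an $\ell_{\lambda_k}$ outer index and $\ell_2$ inner block by peeling off variables with Khinchine (for $\ell_\infty$ factors) or duality (for $\ell_{p_k}$ factors) plus Minkowski, and a Blei-type mixed-norm interpolation lands on the diagonal exponent $\frac{2m}{m+1-2\left|\frac{1}{\mathbf{p}}\right|}$. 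Modulo the unverified bookkeeping of the $\lambda_k$, this is acceptable as a proof sketch.

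The genuine gap is in \eqref{hl2}. Your main route there is the sentence ``making this precise is exactly the Dimant--Sevilla-Peris argument,'' which is a citation, not a proof; and your proposed alternative --- bootstrapping from the endpoint $\left|\frac{1}{\mathbf{p}}\right|=\frac{1}{2}$ of \eqref{hl1} ``by a H\"older step in the large-$1/p_k$ directions'' --- does not go through as stated. The natural implementation of that step is to factor each unit vector of $\ell_{p_k}^n$ as a coordinatewise product of unit vectors of $\ell_{q_k}^n$ and $\ell_{r_k}^n$ with $\frac{1}{p_k}=\frac{1}{q_k}+\frac{1}{r_k}$ and apply the endpoint $\ell_2$-estimate to the multiplied form; but taking the supremum over the product weights $y^{(1)}\otimes\cdots\otimes y^{(m)}$ of the resulting weighted $\ell_2$-norm recovers only a mixed $\ell_{s_1}(\cdots(\ell_{s_m}))$-norm of the coefficients, not the diagonal $\ell_{\frac{1}{1-\left|\frac{1}{\mathbf{p}}\right|}}$-norm, and in this regime the exponent $\frac{1}{1-\left|\frac{1}{\mathbf{p}}\right|}\geq 2$ is not reachable by the same Blei interpolation that closes case \eqref{hl1}. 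The mechanism that actually closes case \eqref{hl2} in \cite{dimant} --- and, not coincidentally, the one driving the main theorem of this very paper --- is to peel off one variable with Khinchine to get a coincidence result with an $\ell_1$-weak index in the last slot, convert it via an inclusion theorem for multiple summing operators (here Theorem \ref{ALBLIZZ} of Albuquerque--Rezende), and induct. That inclusion step is the missing idea in your proposal, and without it the second half of the theorem remains unproved.
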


If $p_1=\cdots=p_m=p$ we denote $C_{m,\mathbf{p}}^\mathbb{K}$ by $C_{m,p}^\mathbb{K}$. When $\left|\frac{1}{\mathbf{p}}\right|=0$ (equivalently $p_1=\cdots=p_m=\infty$), since $\frac{2m}{m+1-2\left|\frac{1}{\mathbf{p}}\right|}=\frac{2m}{m+1}$, we recover the classical Bohnenblust--Hille inequality. Using the generalized Kahane--Salem--Zygmund inequality in \eqref{hl1} and H\"older's inequality in \eqref{hl2} it is possible to conclude that the exponents $\frac{2m}{m+1-2\left|\frac{1}{\mathbf{p}}\right|}$ and $\frac{1}{1-\left|\frac{1}{\mathbf{p}}\right|}$ are optimal: if replaced by smaller exponents the constants appearing on the right-hand-size will depend on $n$.

%The precise estimates of the constants of the Hardy--Littlewood inequalities are unknown and even its asymptotic growth is a mystery (as it happens with the Bohnenblust--Hille inequality). However, looking at the above inequalities by an anisotropic viewpoint a much richer complexity arise (see, for instance, \cite{jfa-abps, isr-abps,ap, ap2, cap, rp}).

The precise growth of the constants $C_{m,\mathbf{p}}^{\mathbb{K}}$, $0\leq \left|\frac{1}{\mathbf{p}}\right|<1$, is important for many applications and remains an open problem in Mathematical Analysis. The first estimates for $C_{m,\mathbf{p}}^{\mathbb{K}}$ had exponential growth; more precisely,
\[
C_{m,\mathbf{p}}^{\mathbb{K}}\leq \left( \sqrt{2}\right) ^{m-1}.
\]
The case $0\leq\left|\frac{1}{\mathbf{p}}\right|\leq \frac{1}{2}$ was more explored since it appearance. Several studies have made significant progress in the context $0\leq\left|\frac{1}{\mathbf{p}}\right|\leq \frac{1}{2}$ (see for instance \cite{alb,n,ap,bayart,ap2}). For example, among other results, it was proved in \cite{bayart,ap2} that for $2m(m-1)^{2}<p\leq \infty$ we have
\begin{align*}
& C_{m,p}^{\mathbb{R}} < \kappa _{1}\cdot m^{\frac{2-\log 2-\gamma }{2}}\approx \kappa _{1}\cdot m^{0.36482}, \\
& C_{m,p}^{\mathbb{C}} < \kappa _{2}\cdot m^{\frac{1-\gamma }{2}}\approx \kappa _{2}\cdot m^{0.21139},
\end{align*}
for certain constants $\kappa _{1},\kappa _{2}>0,$ where $\gamma$ is the Euler-Mascheroni constant.

On the other hand, the case $\frac{1}{2}\leq\left|\frac{1}{\mathbf{p}}\right|<1$ was virtually unexplored and only recently in \cite{UNIV, anps} is that the original estimate was improved. Our main result generalizes some of the main results of \cite{UNIV, anps}.

One of the main results of \cite{UNIV} is the following result:

\begin{theorem}\label{theor3.3}
Let $m\geq 2$ be a positive integer and $\mathbf{p}=(p_1,\ldots,p_m)\in(1,\infty]$ with $\frac{1}{2}\leq \left|\frac{1}{\mathbf{p}}\right|<1$. Then, for all $m$-linear forms $T:\ell _{p_{1}}^{n}\times \cdots \times \ell_{p_{m}}^{n}\rightarrow \mathbb{K}$ and all positive integers $n$,
\begin{equation*}
\left( \sum_{j_{1},\ldots,j_{m}=1}^{n}\left\vert T\left( e_{j_{1}},\ldots,e_{j_{m}}\right) \right\vert ^{\frac{1}{1-\left|\frac{1}{\mathbf{p}}\right|}}\right) ^{1-\left|\frac{1}{\mathbf{p}}\right| }  \leq 2^{(m-1)\left( 1-\left|\frac{1}{\mathbf{p}}\right| \right) }\left\Vert T\right\Vert.
\end{equation*}
%In particular, if $m<p\leq 2m$, then, for all continuous $m$-linear forms $T:\ell _{p}\times \cdots \times \ell_{p}\rightarrow \mathbb{K}$
%\[
%\left( \sum_{j_{1},\ldots,j_{m}=1}^{\infty }\left\vert T\left( e_{j_{1}},\ldots,e_{j_{m}}\right) \right\vert^{\frac{p}{p-m}}\right) ^{\frac{p-m}{p}}\leq 2^{\frac{\left( m-1\right) \left( p-m\right) }{p}}\left\Vert T\right\Vert.
%\]
\end{theorem}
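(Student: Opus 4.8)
My plan is to obtain Theorem~\ref{theor3.3} by complex interpolation between two extreme instances of Hardy--Littlewood summability: the inequality \eqref{hl1} at the critical value $\left|\frac{1}{\mathbf{p}}\right|=\frac12$, where the exponent degenerates to $2$ and (by the classical estimate quoted above) the constant is $(\sqrt2)^{m-1}$; and the trivial bound $\sup_{j_1,\dots,j_m}\left|T(e_{j_1},\dots,e_{j_m})\right|\leq\left\Vert T\right\Vert$, which holds on \emph{any} product $\ell_{r_1}^n\times\cdots\times\ell_{r_m}^n$ because each $e_{j_i}$ has norm $1$. Write $\theta:=\left|\frac{1}{\mathbf{p}}\right|$; since at $\theta=\frac12$ the asserted inequality is exactly the classical bound ($2^{(m-1)(1-1/2)}=(\sqrt2)^{m-1}$), I may assume $\frac12<\theta<1$ and set $s:=2\theta-1\in(0,1)$. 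Define $\mathbf{q}=(q_1,\dots,q_m)$ and $\mathbf{r}=(r_1,\dots,r_m)$ by
\[
\frac{1}{q_i}:=\frac{1}{2\theta}\cdot\frac{1}{p_i},\qquad \frac{1}{r_i}:=\frac{1}{\theta}\cdot\frac{1}{p_i}\qquad(1\leq i\leq m).
\]
A one-line check gives $q_i\in(1,\infty]$, $r_i\in[1,\infty]$, $\left|\frac{1}{\mathbf{q}}\right|=\frac12$, $\left|\frac{1}{\mathbf{r}}\right|=1$, and (using $\frac{1+s}{2}=\theta$) the interpolation identity $\frac{1}{p_i}=\frac{1-s}{q_i}+\frac{s}{r_i}$ for each $i$.

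Denote by $\mathcal{L}(\ell_{t_1}^n,\dots,\ell_{t_m}^n)$ the space of $m$-linear forms on $\ell_{t_1}^n\times\cdots\times\ell_{t_m}^n$ equipped with $\left\Vert\cdot\right\Vert$; as a vector space this is $\mathbb{K}^{n^m}$, the coefficients being $a_{j_1\dots j_m}=S(e_{j_1},\dots,e_{j_m})$. Thus the coefficient map $\Phi$ is just the identity of $\mathbb{K}^{n^m}$, and by \eqref{hl1} applied to $\mathbf{q}$ (where the exponent is $2$, since $\left|\frac{1}{\mathbf{q}}\right|=\frac12$) together with the classical constant it sends $\mathcal{L}(\ell_{q_1}^n,\dots,\ell_{q_m}^n)$ into $\ell_2^{n^m}$ with norm $\leq(\sqrt2)^{m-1}$, while it trivially sends $\mathcal{L}(\ell_{r_1}^n,\dots,\ell_{r_m}^n)$ into $\ell_\infty^{n^m}$ with norm $\leq1$. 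Since everything is finite-dimensional, complex interpolation commutes isometrically with the projective tensor product (a theorem of Kouba) and with dualization, and $\mathcal{L}(\ell_{t_1}^n,\dots,\ell_{t_m}^n)=(\ell_{t_1}^n\otimes_\pi\cdots\otimes_\pi\ell_{t_m}^n)^\ast$, so
\[
\bigl[\mathcal{L}(\ell_{q_1}^n,\dots,\ell_{q_m}^n),\,\mathcal{L}(\ell_{r_1}^n,\dots,\ell_{r_m}^n)\bigr]_s=\mathcal{L}(\ell_{p_1}^n,\dots,\ell_{p_m}^n)
\]
isometrically, while $\bigl[\ell_2^{n^m},\ell_\infty^{n^m}\bigr]_s=\ell_\rho^{n^m}$ isometrically with $\frac1\rho=\frac{1-s}{2}=1-\theta$. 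The Riesz--Thorin theorem now gives
\[
\bigl\Vert\,\Phi\colon\mathcal{L}(\ell_{p_1}^n,\dots,\ell_{p_m}^n)\to\ell_\rho^{n^m}\,\bigr\Vert\leq\bigl((\sqrt2)^{m-1}\bigr)^{1-s}\cdot1^{\,s}=2^{(m-1)(1-\theta)},
\]
and since $\rho=\frac{1}{1-\theta}=\frac{1}{1-\left|\frac{1}{\mathbf{p}}\right|}$, evaluating at $T$ yields precisely the claim.

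The numerology ($\mathbf{q},\mathbf{r},s$ and the identity $\frac{1-s}{2}=1-\theta$) and the two endpoint estimates are routine; the one delicate point is the \emph{isometric} interpolation formula $\bigl[\mathcal{L}(\ell_{q_1}^n,\dots,\ell_{q_m}^n),\mathcal{L}(\ell_{r_1}^n,\dots,\ell_{r_m}^n)\bigr]_s=\mathcal{L}(\ell_{p_1}^n,\dots,\ell_{p_m}^n)$, which relies on the compatibility of complex interpolation with $\otimes_\pi$ and with duality in finite dimensions and on $[\ell_u^n,\ell_v^n]_s=\ell_w^n$ when $\frac1w=\frac{1-s}{u}+\frac{s}{v}$ (in the real-scalar case one first complexifies). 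I expect this to be the main obstacle to write out in full rigour. For a genuinely self-contained ``simplified proof'' one can bypass the abstract machinery and apply the three-lines lemma directly: consider the analytic family of $m$-linear forms with symbol $(\operatorname{sgn}a_{\mathbf{j}})\lvert a_{\mathbf{j}}\rvert^{g(z)}$ ($g$ affine with $g(s)=1$), viewed on the spaces $\ell_{p_1(z)}^n\times\cdots\times\ell_{p_m(z)}^n$ with $\frac{1}{p_i(z)}=(1-z)\frac{1}{q_i}+z\frac{1}{r_i}$, tested against an appropriately modulated element of the dual; on $\operatorname{Re}z=0$ and $\operatorname{Re}z=1$ one invokes, respectively, the $\ell_2$-estimate with constant $(\sqrt2)^{m-1}$ and the trivial $\ell_\infty$-estimate, and the three-lines lemma returns the constant $\bigl((\sqrt2)^{m-1}\bigr)^{1-s}=2^{(m-1)(1-\theta)}$.
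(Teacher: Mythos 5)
A preliminary remark: the paper does not actually prove Theorem~\ref{theor3.3} --- it quotes it from \cite{UNIV} and uses it as the base case of the induction establishing the main theorem, whose mechanism (Khinchine's inequality plus the Albuquerque--Rezende inclusion theorem for multiple summing operators, adding one coordinate at a time) is also the mechanism behind the proof in \cite{UNIV}. Your interpolation route is therefore genuinely different from anything in this circle of papers, but it has a real gap, located exactly at the step you yourself flag as delicate.

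The identity $\bigl[\mathcal{L}(\ell_{q_1}^n,\dots,\ell_{q_m}^n),\mathcal{L}(\ell_{r_1}^n,\dots,\ell_{r_m}^n)\bigr]_s=\mathcal{L}(\ell_{p_1}^n,\dots,\ell_{p_m}^n)$ is not a routine compatibility statement, because the containment your Riesz--Thorin step requires points in the non-automatic direction. Multilinear complex interpolation of the canonical map $\otimes$ gives, contractively, a map from the projective tensor product of the interpolation spaces \emph{into} the interpolation space of the projective tensor products; dualizing (harmless in finite dimensions) therefore yields only $\bigl[\mathcal{L}(\ell_{q_1}^n,\dots,\ell_{q_m}^n),\mathcal{L}(\ell_{r_1}^n,\dots,\ell_{r_m}^n)\bigr]_s\hookrightarrow\mathcal{L}(\ell_{p_1}^n,\dots,\ell_{p_m}^n)$ with norm $\le1$. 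You need the reverse embedding, i.e.\ that every $T$ with $\Vert T\Vert_{\mathcal{L}(\ell_{p_1}^n,\dots,\ell_{p_m}^n)}\le1$ lies in the interpolation space with norm $\le1$; this is exactly the assertion that complex interpolation commutes with $\otimes_\pi$, which is false in general. Kouba's positive results for projective tensor products of $\ell_p$-spaces require the indices of the factors to lie in $[1,2]$, whereas your endpoint $\mathbf{q}$ has $q_i\ge2$ for every $i$ (indeed $\sum_i 1/q_i=1/2$), so they do not apply. This is precisely the classical obstruction to proving Bohnenblust--Hille and Hardy--Littlewood inequalities by interpolating the domain spaces. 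The three-lines variant at the end does not evade it: for the family with coefficients $(\operatorname{sgn}a_{\mathbf{j}})\lvert a_{\mathbf{j}}\rvert^{g(z)}$ there is no bound for the norm of $T_z$ as a form on $\ell_{q_1}^n\times\cdots\times\ell_{q_m}^n$ when $\operatorname{Re}z=0$ in terms of $\Vert T\Vert_{\mathcal{L}(\ell_{p_1}^n,\dots,\ell_{p_m}^n)}$, since the sup norm of a multilinear form is not controlled under taking pointwise powers of its coefficient array. Your numerology (the choice of $\mathbf{q}$, $\mathbf{r}$, $s$, the two endpoint estimates, and the arithmetic producing $2^{(m-1)(1-\theta)}$) is all correct, but without the missing embedding the argument does not close.
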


As a consequence, when $m<p_1=\cdots=p_m=p\leq m+1$, the optimal constants of the Hardy--Littlewood inequalities are uniformly bounded by $2$. In fact, for $m<p\leq m+1$ we have
\[
\left( \sum_{j_{1},\ldots,j_{m}=1}^{n}\left\vert T\left( e_{j_{1}},\ldots,e_{j_{m}}\right) \right\vert ^{\frac{p}{p-m}}\right) ^{\frac{p-m}{p}}\leq 2^{\frac{m-1}{m+1}}\left\Vert T\right\Vert <2\left\Vert T\right\Vert,
\]
for all $m$-linear forms $T:\ell _{p}^{n}\times \cdots \times \ell_{p}^{n}\rightarrow \mathbb{K}$ and all positive integers $n$.

Another important contribution in this setting ($\frac{1}{2}\leq \left|\frac{1}{\mathbf{p}}\right|<1$) is the following result of Aron, N\'u\~nez-Alarc\'on, Pellegrino and Serrano-Rodr\'iguez (see \cite[Corollary 3.3]{anps}):

\begin{theorem}\label{corrrr}
Let $m\geq 2$ be a positive integer and $\mathbf{p}=(p_{1},\ldots ,p_{m})\in (1,\infty]^m$ be such that $1<p_m\leq 2 <p_1,\ldots,p_{m-1}$ and
\[
\frac{1}{2}\leq \left|\frac{1}{\mathbf{p}}\right|<1.
\]
Then
\[
\left( \sum_{j_{1},\ldots,j_{=m}}^{n}|T(e_{j_{1}},\ldots,e_{j_{m}})|^{\frac{1}{1-\left|\frac{1}{\mathbf{p}}\right|}}\right) ^{1-\left|\frac{1}{\mathbf{p}}\right|}\leq \Vert T\Vert,
\]
for all $m$-linear forms $T:\ell _{p_{1}}^{n}\times \cdots \times \ell_{p_{m}}^{n}\rightarrow \mathbb{K}$ and all positive integers $n$.
\end{theorem}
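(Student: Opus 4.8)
\emph{Proof proposal.}\quad Set $q:=p_m'=\frac{p_m}{p_m-1}$; since $1<p_m\le2$ we have $q\ge2$. Using $(\ell_{p_m}^n)^*=\ell_q^n$ isometrically, $T$ corresponds to the $(m-1)$-linear map $S:\ell_{p_1}^n\times\cdots\times\ell_{p_{m-1}}^n\to\ell_q^n$, $S(x_1,\dots,x_{m-1}):=\big(T(x_1,\dots,x_{m-1},e_k)\big)_{k=1}^n$, with $\|S\|=\|T\|$. From $\left|\frac1{\mathbf p}\right|<1$ we get $\sum_{i=1}^{m-1}\frac1{p_i}<1-\frac1{p_m}=\frac1q$; hence $p_i>q$ for all $i\le m-1$, the exponents $r_i:=p_i/q$ satisfy $r_i>1$ and $\sum_{i<m}\frac1{r_i}=q\sum_{i<m}\frac1{p_i}<1$, and $\rho:=\big(1-\left|\frac1{\mathbf p}\right|\big)^{-1}$ satisfies $\frac1\rho=\frac1q-\sum_{i<m}\frac1{p_i}$, so $\rho\ge q$. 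Since $\|\cdot\|_\rho\le\|\cdot\|_q$ (as $\rho\ge q$), it suffices to prove
\begin{equation}\tag{$\ast$}
\Big(\sum_{j_1,\dots,j_{m-1}=1}^{n}\big\|S(e_{j_1},\dots,e_{j_{m-1}})\big\|_q^{\rho}\Big)^{1/\rho}\le\|T\|,
\end{equation}
because then $\sum_{j_m=1}^{n}|T(e_{j_1},\dots,e_{j_m})|^{\rho}\le\big(\sum_{j_m}|T(e_{j_1},\dots,e_{j_m})|^{q}\big)^{\rho/q}=\|S(e_{j_1},\dots,e_{j_{m-1}})\|_q^{\rho}$, and summing over $j_1,\dots,j_{m-1}$ gives the theorem.

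The first ingredient is a Khinchin-type inequality with constant one valid in $L_q$, $q\ge2$: for $w_1,\dots,w_N\in\ell_q^n$ one has
\[
\sum_{i=1}^{N}\|w_i\|_q^{q}\ \le\ \mathbb{E}_{\varepsilon}\Big\|\sum_{i=1}^{N}\varepsilon_iw_i\Big\|_q^{q}\ \le\ \max_{\varepsilon\in\{-1,1\}^{N}}\Big\|\sum_{i=1}^{N}\varepsilon_iw_i\Big\|_q^{q}
\]
($\mathbb{E}_\varepsilon$ the average over $\varepsilon\in\{-1,1\}^N$, Steinhaus variables if $\mathbb K=\mathbb C$): the first inequality follows by integrating over the coordinate and using $\mathbb{E}_\varepsilon\big|\sum_i\varepsilon_ia_i\big|^{q}\ge\big(\sum_i|a_i|^{2}\big)^{q/2}\ge\sum_i|a_i|^{q}$ for $q\ge2$. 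Substituting $x_i=(\varepsilon^{(i)}_{j_i}\lambda^{(i)}_{j_i})_{j_i}$ with $\lambda^{(i)}_{j_i}\ge0$ into $\|S(x_1,\dots,x_{m-1})\|_q\le\|T\|\prod_i\|\lambda^{(i)}\|_{p_i}$, taking the $q$-th power and applying the averaged estimate once to each of the $m-1$ blocks of signs, we obtain
\[
\sum_{j_1,\dots,j_{m-1}}\Big(\prod_{i=1}^{m-1}(\lambda^{(i)}_{j_i})^{q}\Big)\big\|S(e_{j_1},\dots,e_{j_{m-1}})\big\|_q^{q}\ \le\ \|T\|^{q}\prod_{i=1}^{m-1}\Big(\sum_{j_i}(\lambda^{(i)}_{j_i})^{p_i}\Big)^{q/p_i}.
\]
Writing $\mu^{(i)}_{j_i}:=(\lambda^{(i)}_{j_i})^{q}$ and $c_{\mathbf j}:=\|S(e_{j_1},\dots,e_{j_{m-1}})\|_q^{q}$, this says exactly that the nonnegative $(m-1)$-linear form with coefficients $(c_{\mathbf j})$ has norm $\le\|T\|^{q}$ on $\ell_{r_1}^n\times\cdots\times\ell_{r_{m-1}}^n$.

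The second ingredient is that, for forms with nonnegative coefficients, the Hardy--Littlewood inequality holds with constant one: if $c_{\mathbf j}\ge0$, $\sum_i\frac1{r_i}<1$, $\frac1\theta:=1-\sum_i\frac1{r_i}$, and $M:=\sup\{\sum_{\mathbf j}c_{\mathbf j}\prod_i\mu^{(i)}_{j_i}:\|\mu^{(i)}\|_{r_i}\le1\}$, then $\big(\sum_{\mathbf j}c_{\mathbf j}^{\theta}\big)^{1/\theta}\le M$. This I would prove by a pointwise-domination argument: by duality $\big(\sum_{\mathbf j}c_{\mathbf j}^{\theta}\big)^{1/\theta}=\sup\{\sum_{\mathbf j}c_{\mathbf j}d_{\mathbf j}:d_{\mathbf j}\ge0,\ \sum_{\mathbf j}d_{\mathbf j}^{\theta'}\le1\}$ with $\frac1{\theta'}=\sum_i\frac1{r_i}$; given such a $d$, define $\mu^{(i)}$ by $(\mu^{(i)}_{t})^{r_i}:=\sum_{\mathbf j:\,j_i=t}d_{\mathbf j}^{\theta'}$, so $\|\mu^{(i)}\|_{r_i}^{r_i}=\sum_{\mathbf j}d_{\mathbf j}^{\theta'}\le1$ and $d_{\mathbf j}^{\theta'}\le(\mu^{(i)}_{j_i})^{r_i}$ for every $i$; since $\sum_i\frac{\theta'}{r_i}=1$, we get $d_{\mathbf j}=\prod_i d_{\mathbf j}^{\theta'/r_i}\le\prod_i\mu^{(i)}_{j_i}$, hence $\sum_{\mathbf j}c_{\mathbf j}d_{\mathbf j}\le\sum_{\mathbf j}c_{\mathbf j}\prod_i\mu^{(i)}_{j_i}\le M$. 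Applying this with $c_{\mathbf j}=\|S(e_{j_1},\dots,e_{j_{m-1}})\|_q^{q}$ and $M=\|T\|^{q}$, and noting that $q\theta=\rho$ (because $\frac1{q\theta}=\frac1q\big(1-q\sum_{i<m}\frac1{p_i}\big)=\frac1q-\sum_{i<m}\frac1{p_i}=\frac1\rho$), yields precisely $(\ast)$, completing the argument.

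The step I expect to be the real obstacle is recognizing that constant one is reachable, i.e.\ that no Khinchin or cotype constant must be lost: the hypothesis $p_m\le2$ is used exactly so that $q=p_m'\ge2$, the range in which the lower Khinchin constant in $L_q$ is $1$; and passing to a \emph{nonnegative} coefficient array is what lets the remaining Hardy--Littlewood inequality collapse to a sharp Hölder/duality estimate with constant $1$. The rest is exponent bookkeeping, all driven by the identity $\frac1\rho=\frac1{p_m'}-\sum_{i<m}\frac1{p_i}$.
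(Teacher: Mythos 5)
Your argument is correct, but it takes a genuinely different route from the paper's. The paper does not prove Theorem~\ref{corrrr} directly: it recovers it (in the slightly more general form of Corollary~\ref{corolario}) as the case $s=1$ of its main theorem, whose proof starts from the one-coordinate base case, adjoins an $\ell_\infty^n$ coordinate using the Khinchine inequality with constant $A_q=1$ for $q\ge 2$, and then converts that $\ell_\infty$ coordinate into an $\ell_{p_k}$ coordinate (with the matching change of exponent) via the Albuquerque--Rezende inclusion theorem for multiple summing operators, iterating by induction over the remaining coordinates. You instead dualize the single small exponent into the range, viewing $T$ as an $\ell_q^n$-valued $(m-1)$-linear map with $q=p_m'\ge 2$, use the same constant-one lower Khinchine bound (applied coordinatewise in $\ell_q$, once per block of signs) to pass to a multilinear form with nonnegative coefficients on $\ell_{p_i/q}^n$, and close with a one-shot H\"older/duality splitting: the pointwise domination $d_{\mathbf j}=\prod_i d_{\mathbf j}^{\theta'/r_i}\le\prod_i\mu^{(i)}_{j_i}$, valid because $\sum_i\theta'/r_i=1$, shows that the Hardy--Littlewood inequality for nonnegative coefficient arrays holds with constant exactly one. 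Both proofs hinge on the same decisive fact --- no Khinchine constant is lost once the relevant exponent is at least $2$, which is precisely what $p_m\le 2$ buys --- but they process the $m-1$ large coordinates entirely differently: the paper's abstract, inductive summing-operator machinery versus your concrete, self-contained domination argument. Your route is shorter and more elementary for this particular statement (and, after permuting coordinates, also yields Corollary~\ref{corolario}), whereas the paper's route is what generalizes to arbitrary $s$ with the constant $2^{(s-1)[1-(1/p_{k_1}+\cdots+1/p_{k_s})]}$. The only detail worth spelling out is the degenerate case $p_i=\infty$ for some $i<m$, where $r_i=\infty$ and one simply takes $\mu^{(i)}\equiv 1$ and drops that index from the splitting; this causes no difficulty.
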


Our main result generalizes Theorem \ref{theor3.3} and has as a consequence a more general result than Theorem \ref{corrrr}. It is important to mention that the proof of our main result is not just an adaptation of the original proof of \ref{theor3.3} and that the proof given in \cite{anps} for Theorem \ref{corrrr} is, in some sense, very extensive and complicated. Our approach is simpler and more self-contained.

\section{Main results}

We begin this section by recalling some important auxiliary results that will be essential to our purpose.

An important auxiliary result that will be used along this note is the Khinchine inequality for real and complex scalars. More precisely, the Khinchine inequality assures that for any $0<q<\infty $, there are positive constants $A_{q}^{\mathbb{K}}$ such that regardless of the positive integer $n$ and of the scalar sequence $(a_{j})_{j=1}^{n}$ we have
\begin{equation*}
A_{q}\left( \sum\limits_{j=1}^{n}|a_{j}|^{2}\right) ^{\frac{1}{2}}\leq \left( \int_{0}^{1}\left\vert \sum\limits_{j=1}^{n}a_{j}r_{j}(t)\right\vert ^{q}dt\right) ^{\frac{1}{q}},
\end{equation*}
where $r_{j}$ are the Rademacher functions.

The next result concerns the multilinear theory of absolutely summing operators initiated by Pietsch \cite{pi1}. It was proved very recently by Albuquerque and Rezende in \cite[Theorem 3]{ALBLIZ} and also will be essential for us. First, let us present some required definitions. Let $B_{E^{\ast }}$ be the closed unit ball of the topological dual of $E$. If $1\leq q\leq \infty $, the symbol $q^{\ast}$ represents the conjugate of $q$. It will be convenient to adopt that $\frac{c}{\infty }=0$ for any $c>0$; for $s\geq 1$ we represent by $\ell _{s}^{w}(E)$ the linear space of the sequences $\left( x_{j}\right)_{j=1}^{\infty }$ in $E$ such that $\left( \varphi \left( x_{j}\right)\right) _{j=1}^{\infty }\in \ell _{s}$ for every continuous linear functional $\varphi :E\rightarrow \mathbb{K}$. For $\left( x_{j}\right) _{j=1}^{\infty }\in \ell _{s}^{w}(E)$ the expression $\|(x_j)_{j=1}^\infty\|_{w,s}:=\sup_{\varphi \in B_{E^{\ast }}}\Vert \left( \varphi \left(x_{j}\right) \right) _{j=1}^{\infty }\Vert _{s}$ defines a norm on $\ell _{s}^{w}(E)$. The space of all continuous $m$-linear operators $T:E_{1}\times \cdots \times E_{m}\rightarrow F$, with the $\sup$ norm, is denoted by $\mathcal{L}\left( E_{1},...,E_{m};F\right)$. For $\mathbf{p},\mathbf{q}\in\lbrack1,+\infty)^{m}$, a multilinear operator $T:E_{1}\times\cdots\times E_{m}\rightarrow F$ is multiple $(\mathbf{q};\mathbf{p})$-summing if there exist a constant $C>0$ such that
\[
\left( \sum\limits_{j_{1}=1}^{\infty}\left( \cdots\left( \sum\limits_{j_{m}=1}^{\infty}\left\Vert T( x_{j_{1}}^{(1)},\dots,x_{j_{m}}^{(m)})\right\Vert_{F}^{q_{m}}\right)^{\frac{q_{m-1}}{q_{m}}}\cdots\right)^{\frac{q_{1}}{q_{2}}}\right)^{\frac{1}{q_{1}}}\leq C \prod\limits_{k=1}^{m}\left\Vert(x_{j}^{(k)})_{j=1}^{\infty}\right\Vert _{w,p_{k}}
\]
for all $(x_{j}^{(k)})_{j=1}^{\infty}\in\ell_{p_{k}}^{w}\left( E_{k}\right)$. We represent the class of all multiple $(\mathbf{q};\mathbf{p})$-summing operators by $\Pi_{(\mathbf{q};\mathbf{p})}^m\left(E_{1},\dots,E_{m};F\right)$. When $q_1=\cdots=q_m=q$, we denote $\Pi_{(\mathbf{q};\mathbf{p})}^m\left(E_{1},\dots,E_{m};F\right)$ by $\Pi_{\left(q;\mathbf{p}\right) }^m\left(
E_{1},\dots,E_{m};F\right)$. For recent results on the theory of multiple $(\mathbf{q};\mathbf{p})$-summing operators we refer to \cite{PELLEGRINO2017802}.

\begin{theorem}[Albuquerque and Rezende \cite{ALBLIZ}]\label{ALBLIZZ}
Let $m$ be a positive integer and $r\geq 1,\mathbf{s},\mathbf{p},\mathbf{q}\in[1,\infty)^m$ be such that
\[
\frac{1}{r}-\left|\frac{1}{\mathbf{p}}\right|+\left|\frac{1}{\mathbf{q}}\right|>0
\]
and, for each $k=1,\ldots,m$, $q_k\geq p_k$ and
\[
\frac{1}{s_k}-\left|\frac{1}{\mathbf{q}}\right|_{\geq k}=\frac{1}{r}-\left|\frac{1}{\mathbf{p}}\right|_{\geq k}.
\]
Then
\[
\Pi_{(r;\mathbf{p})}^m(E_1,\ldots,E_m;F)\subset \Pi_{(\mathbf{s},\mathbf{q})}^m(E_1,\ldots,E_m;F)
\]
for any Banach spaces $E_1,\ldots,E_m,F$ and the inclusion operator has norm $1$.
\end{theorem}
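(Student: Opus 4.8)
The plan is to peel off one coordinate at a time and reduce everything to the classical linear inclusion theorem for absolutely summing operators, which is precisely the case $m=1$ of the statement: if $u\colon E\to F$ is $(r;p)$-summing and $q\ge p$, $s\ge 1$ satisfy $\frac{1}{s}-\frac{1}{q}=\frac{1}{r}-\frac{1}{p}$, then $u$ is $(s;q)$-summing with $\pi_{s;q}(u)\le\pi_{r;p}(u)$, where $\pi_{r;p}(u)$ denotes the least admissible constant; this is classical (Pietsch domination plus H\"older) and its inclusion operator has norm $1$. Starting from the trivial remark that $\Pi^m_{(r;\mathbf{p})}$ coincides with the class with summing exponents $(r,\dots,r)$ and weak exponents $\mathbf{p}$ (an iterated $\ell_r$-norm being a single $\ell_r$-norm over the product), I would upgrade the coordinates in the order $\ell=m,m-1,\dots,1$, replacing at the $\ell$-th step the weak exponent $p_\ell$ by $q_\ell$.

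The invariant to maintain is: $T$ satisfies a multiple-summing-type inequality, with least constant at most $\pi_{(r;\mathbf{p})}(T)$, whose weak exponents are $q_k$ for the already-processed coordinates and $p_k$ for the rest, and whose summing exponents form a non-increasing list $\mathbf{a}$ with a leading block of equal entries (so $a_\ell$ equals that common value $a$). The one-coordinate step then is: under this hypothesis, $T$ also satisfies the inequality obtained by replacing $p_\ell$ with $q_\ell$ and each $a_k$ with $k\le\ell$ by $a_k'$ with $\frac{1}{a_k'}=\frac{1}{a_k}+\frac{1}{q_\ell}-\frac{1}{p_\ell}$ (keeping $a_k$ for $k>\ell$), at the cost of a factor $\le 1$ in the constant.

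To prove this step, fix all sequences except the $\ell$-th and set $u\colon E_\ell\to Z$, $u(y)=\bigl(T(x_{j_1}^{(1)},\dots,y,\dots,x_{j_m}^{(m)})\bigr)_{j_1,\dots,\widehat{j_\ell},\dots,j_m}$, where $Z$ is the mixed-norm space over the remaining indices carrying the inherited exponents (a Banach space, since every exponent in sight is $\ge r\ge1$). Taking the $\ell_a$-norm over $j_\ell$ merges the leading $\ell_a$ of $Z$ with this new $\ell_a$ into a single $\ell_a$ over $\{j_1,\dots,j_\ell\}$, so the current inequality says exactly that $u$ is $(a;p_\ell)$-summing with $\pi_{a;p_\ell}(u)\le \pi_{(r;\mathbf{p})}(T)\prod_{k<\ell}\|(x_j^{(k)})_j\|_{w,p_k}\prod_{k>\ell}\|(x_j^{(k)})_j\|_{w,q_k}$. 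The linear inclusion theorem upgrades $u$ to an $(a';q_\ell)$-summing operator with $\pi_{a';q_\ell}(u)\le\pi_{a;p_\ell}(u)$; unwinding this yields the desired inequality for $T$ except that the $\ell_{a'}$-norm over $j_\ell$ lands on the outside instead of in its $\ell$-th slot. Passing to the wanted arrangement is legitimate with constant $1$ because $a'\ge a$: it is the elementary inequality $\sum_i c_i^{\theta}\le(\sum_i c_i)^{\theta}$ for $\theta=a'/a\ge1$, $c_i\ge0$ (that is, $\|\cdot\|_{\ell_\theta}\le\|\cdot\|_{\ell_1}$), applied over the index set $\{j_1,\dots,j_{\ell-1}\}$. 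Composing all the partial estimates, each $\le1$, gives inclusion norm $1$; and telescoping $\frac{1}{a_k'}=\frac{1}{a_k}+\frac{1}{q_\ell}-\frac{1}{p_\ell}$ over $\ell=m,\dots,1$ reproduces exactly $\frac{1}{s_k}-|\frac{1}{\mathbf{q}}|_{\ge k}=\frac{1}{r}-|\frac{1}{\mathbf{p}}|_{\ge k}$.

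The main obstacle is bookkeeping rather than any single deep step: one must verify that upgrading the $\ell$-th weak exponent is correctly absorbed by changing the summing exponents of all coordinates $k\le\ell$ at once; that processing in the order $m,\dots,1$ keeps the summing exponents non-increasing with the required leading equal block, so that the mixed-norm rearrangement always goes in the direction allowed by $\|\cdot\|_{\ell_\theta}\le\|\cdot\|_{\ell_1}$ (no Minkowski-type interchange in the forbidden direction is ever needed); and that the hypothesis $\frac{1}{r}-|\frac{1}{\mathbf{p}}|+|\frac{1}{\mathbf{q}}|>0$, together with $\mathbf{s}\in[1,\infty)^m$, keeps every intermediate exponent inside $[1,\infty)$, so that all the auxiliary spaces $Z$ are genuinely Banach spaces. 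A quicker but less self-contained alternative is to isolate an abstract ``anisotropic regularity principle'' for mixed-norm sequence spaces, packaging the interplay between H\"older's inequality and this norm monotonicity, and deduce the theorem from it in one stroke.
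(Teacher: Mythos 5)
The paper does not actually prove this statement: Theorem \ref{ALBLIZZ} is imported from Albuquerque--Rezende \cite{ALBLIZ} and used as a black box, so there is no in-paper argument to compare against line by line. Your proposal is, as far as I can check, a correct and self-contained proof, and it takes a genuinely different route from the cited source: \cite{ALBLIZ} deduces the inclusion from an abstract anisotropic regularity principle for mixed-norm sequence spaces (the alternative you mention at the end), whereas you run a coordinate-by-coordinate induction whose only external input is the equality case of the linear inclusion theorem, itself a short H\"older/duality argument (Pietsch domination is not actually needed for the equality case). The bookkeeping you flag does close up: processing $\ell=m,\dots,1$ keeps the summing exponents equal on the block $1,\dots,\ell$, and the common value after step $\ell$ is exactly $s_\ell$, since $\tfrac{1}{r}+\sum_{j\geq\ell}\bigl(\tfrac{1}{q_j}-\tfrac{1}{p_j}\bigr)=\tfrac{1}{s_\ell}$; hence the outer iterated norms genuinely collapse to a single $\ell_a$-norm over the multi-index $(j_1,\dots,j_\ell)$ and your operator $u$ is honestly $(a;p_\ell)$-summing into the mixed-norm space $Z$. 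The rearrangement always goes in the allowed direction because $q_\ell\geq p_\ell$ forces $a'\geq a$, and the hypotheses $\mathbf{s}\in[1,\infty)^m$, $q_k\geq p_k$ and $\tfrac{1}{r}-\bigl|\tfrac{1}{\mathbf{p}}\bigr|+\bigl|\tfrac{1}{\mathbf{q}}\bigr|>0$ keep every intermediate exponent in $[1,\infty)$, so each auxiliary $Z$ is a Banach space and each H\"older step is legitimate. What your approach buys is elementarity and a transparent reason why the inclusion norm is $1$ (each of the $m$ steps costs at most $1$); what the regularity-principle route buys is a single general statement applicable beyond this coordinate-decoupled situation. One small overstatement to fix in the write-up: attribute the linear step to the H\"older duality trick rather than to Pietsch domination, since the latter would impose the extra restriction $p_\ell\leq a$ that your hypotheses do not guarantee.
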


Now we are able to present our main result.

\begin{theorem}
Let $m\geq 2$ be a positive integer and $\mathbf{p}=(p_{1},\ldots ,p_{m})\in (1,\infty]^m$ be such that
\[
\frac{1}{2}\leq \left|\frac{1}{\mathbf{p}}\right|<1.
\]
Then
\[
\left( \sum_{j_{1},\ldots,j_{=m}}^{n}|T(e_{j_{1}},\ldots,e_{j_{m}})|^{\frac{1}{1-\left|\frac{1}{\mathbf{p}}\right|}}\right) ^{1-\left|\frac{1}{\mathbf{p}}\right|}\leq 2^{(s-1)\left[1-\left(\frac{1}{p_{k_1}}+\cdots+\frac{1}{p_{k_s}}\right)\right]}\Vert T\Vert,
\]
for all $m$-linear forms $T:\ell _{p_{1}}^{n}\times \cdots \times \ell_{p_{m}}^{n}\rightarrow \mathbb{K}$ and all positive integers $n$, where
\[
s=\min\left\{r \ : 
\begin{array}{l}
\text{there exists }p_{k_1},\ldots,p_{k_r}\in\{p_1,\ldots,p_m \} \text{ with }p_{k_i}\neq p_{k_j}, i\neq j,\\
\text{ and }\frac{1}{2}\leq \frac{1}{p_{k_1}}+\cdots+\frac{1}{p_{k_r}}<1
\end{array}
\right\}
\]
\end{theorem}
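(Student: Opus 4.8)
The plan is to reduce the problem to an application of the Albuquerque--Rezende inclusion theorem (Theorem~\ref{ALBLIZZ}), exactly on the subcollection of $s$ indices realizing the minimum in the definition of $s$, and then to handle the remaining $m-s$ variables trivially via the supremum. Without loss of generality relabel so that $p_{k_1},\dots,p_{k_s}$ are $p_1,\dots,p_s$; write $\sigma:=\frac{1}{p_1}+\cdots+\frac{1}{p_s}$, so $\frac12\le\sigma<1$. The key point coming from the \emph{minimality} of $s$ is that for every $1\le r<s$ and every choice of $r$ distinct values among $p_1,\dots,p_m$, the corresponding sum of reciprocals is either $<\tfrac12$ or $\ge 1$; I expect to use this to guarantee that the Praciano-Pereira / Bohnenblust--Hille regime (exponent $2$, i.e. $|\tfrac1{\mathbf p}|\le\tfrac12$) is available as the base inequality for an $s$-linear form on the first $s$ coordinates, which is the input one feeds into Theorem~\ref{ALBLIZZ}.

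First I would recall the standard reformulation: the inequality to be proved is equivalent to the statement that every bounded $m$-linear form $T:\ell_{p_1}^n\times\cdots\times\ell_{p_m}^n\to\mathbb K$ is multiple $(\rho;\mathbf 1^{*})$-summing-type with the stated constant, where $\rho=\frac{1}{1-|\frac1{\mathbf p}|}$; equivalently, using the canonical identification of coefficients with values on unit vectors, it is a multiple-summing estimate with weak-$\ell_{p_k^*}$ norms on the right. The second step is to freeze the last $m-s$ variables at norm-one vectors of $\ell_{p_{s+1}}^n,\dots,\ell_{p_m}^n$: for fixed such vectors, $T$ restricts to an $s$-linear form $\widetilde T$ on $\ell_{p_1}^n\times\cdots\times\ell_{p_s}^n$ with $\|\widetilde T\|\le\|T\|$, and one must track how summation over $j_{s+1},\dots,j_m$ interacts with the $s$-linear estimate; here one uses that the canonical vectors $e_j$ of $\ell_{p_k}^n$ have $\|(e_j)_j\|_{w,p_k^*}=1$, so the ``extra'' coordinates are absorbed with constant $1$.

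The heart of the argument is the third step: apply Theorem~\ref{ALBLIZZ} to the $s$-linear forms. The natural choice is $r=2$ (or $r$ slightly bigger, chosen so that $\frac1r-|\frac1{\mathbf p}|_{\le s}+|\frac1{\mathbf q}|>0$ holds), $p_k$ replaced by $p_k^{*}$, and $\mathbf q$ the constant vector giving the target exponent $\rho$ on each of the $s$ coordinates after the Minkowski/Hölder bookkeeping; the compatibility conditions $\frac{1}{s_k}-|\frac1{\mathbf q}|_{\ge k}=\frac1r-|\frac1{\mathbf p}|_{\ge k}$ are then solved for $\mathbf s$, and the resulting exponents must be checked to coincide with $\rho$ after symmetrization. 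The base case of multiple summability for $s$-linear forms with $\frac12\le\sigma<1$ is itself obtained by a further reduction: one splits the $s$ variables into the ``$\ge\tfrac12$'' regime using Theorem~\ref{ALBLIZZ} starting from the Bohnenblust--Hille inequality (whose constant is $\le(\sqrt2)^{s-1}$ in the worst case, but here minimality of $s$ forces a cleaner situation); the factor $2^{(s-1)(1-\sigma)}$ emerges precisely as the norm obtained by interpolating the trivial $\ell_1/\ell_\infty$ endpoint against the $(\sqrt2)^{s-1}$ Khinchine-type endpoint, with the interpolation parameter dictated by $\sigma$. Concretely, the constant $2^{(s-1)(1-\sigma)}$ should appear as $\bigl((\sqrt2)^{s-1}\bigr)^{2(1-\sigma)}$ when $\sigma$ ranges in $[\tfrac12,1)$, matching the $s=m$ case of Theorem~\ref{theor3.3}.

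I expect the \textbf{main obstacle} to be the bookkeeping that guarantees the exponent produced by Theorem~\ref{ALBLIZZ} is \emph{exactly} $\rho=\frac{1}{1-|\frac1{\mathbf p}|}$ and not merely $\le\rho$, together with verifying that the hypothesis $\frac1r-|\frac1{\mathbf p}|_{\le s}+|\frac1{\mathbf q}|>0$ can be met simultaneously with $q_k\ge p_k^{*}$ for \emph{all} admissible $\mathbf p$ in the range $\tfrac12\le\sigma<1$ — this is where the minimality of $s$ is genuinely needed, since for a non-minimal collection the inclusion theorem would either be inapplicable or would yield a strictly worse constant. Once the exponents match, the constant bound is a matter of multiplying the $\le 1$ contributions from the frozen variables by the interpolated constant $2^{(s-1)(1-\sigma)}$ from the $s$-linear core, and Theorem~\ref{theor3.3} is recovered by taking $s=m$.
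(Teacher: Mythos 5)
Your proposal assembles the right ingredients (the $s$-linear base case with constant $2^{(s-1)(1-\sigma)}$, the multiple-summing reformulation, and Theorem~\ref{ALBLIZZ}), but the step that carries the argument from $s$ variables to $m$ variables is where the real work lies, and as written it does not go through. Freezing the last $m-s$ variables at unit vectors only gives, for each fixed choice, an $\ell_{\rho_s}$-bound on the $s$-dimensional slice with $\rho_s=\frac{1}{1-\sigma}$; it does not produce the $m$-fold sum with the strictly larger exponent $\rho_m=\frac{1}{1-\left|\frac{1}{\mathbf p}\right|}$, and the observation that $\|(e_j)_j\|_{w,p_k^\ast}=1$ only tells you how to \emph{use} an $m$-linear multiple-summing coincidence once you have it — it does not \emph{establish} that coincidence. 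The paper instead adds the remaining variables one at a time: first Khinchine's inequality appends an $\ell_\infty^n$ variable with constant $1$ (this is exactly where the hypothesis $\frac{1}{1-\sigma}\geq 2$ is used, both to get $A_{\mathbb K,\rho_s}=1$ and to dominate the $\ell_{\rho_s}$-sum in the new index by the $\ell_2$-sum), giving the coincidence $\Pi_{(\rho_s;\,p_1^\ast,\ldots,p_s^\ast,1)}^{s+1}=\mathcal L$; then Theorem~\ref{ALBLIZZ} trades the weak-$\ell_1$ slot for weak-$\ell_{p_{s+1}^\ast}$ while simultaneously lowering the exponent from $\frac{1}{1-\sigma}$ to $\frac{1}{1-\sigma-1/p_{s+1}}$, all with norm $1$; induction finishes. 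Your plan omits the Khinchine step entirely, and your application of Theorem~\ref{ALBLIZZ} ``to the $s$-linear forms with $r=2$'' is aimed at the base case rather than at this extension, so the exponent never gets corrected from $\rho_s$ to $\rho_m$.

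Two smaller points. The $s$-linear base case does not need to be re-derived by interpolation: it is literally Theorem~\ref{theor3.3} applied to the sub-collection $p_{k_1},\ldots,p_{k_s}$ (your interpolation heuristic $\bigl((\sqrt2)^{s-1}\bigr)^{2(1-\sigma)}=2^{(s-1)(1-\sigma)}$ is how that theorem is proved elsewhere, but here it is quoted). And the minimality of $s$ is not what makes the inclusion theorem applicable — the argument works verbatim for \emph{any} sub-collection with $\frac12\leq\sigma<1$; minimality only serves to make the resulting constant as small as possible.
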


\begin{proof}
For the sake of simplicity let us suppose that $p_{k_1}=p_1,\ldots,p_{k_s}=p_s$. Since $$\frac{1}{2}\leq \left|\frac{1}{\mathbf{p}}\right|_{\leq s}<1,$$ it follows from the Theorem \ref{theor3.3} that
\[
\left( \sum_{j_1,\ldots,j_s=1}^{n}|T_s(e_{j_1},\ldots, e_{j_s})|^{\frac{1}{1-\left|\frac{1}{\mathbf{p}}\right|_{\leq s}}}\right) ^{1-\left|\frac{1}{\mathbf{p}}\right|_{\leq s}}\leq 2^{(s-1)\left[1-\left|\frac{1}{\mathbf{p}}\right|_{\leq s}\right]}\Vert T_s\Vert
\]
for all $s$-linear forms $T_s:\ell _{p_{1}}^{n}\times \cdots \times \ell_{p_{s}}^{n}\rightarrow \mathbb{K}$ and all positive integers $n$. In view of the Kinchine's inequality we have, for every $n$ and all $(s+1)$-linear forms $T_{s+1}:\ell _{p_{1}}^{n}\times \cdots \times\ell_{p_{s}}^{n}\times\ell_{\infty}^{n} \rightarrow \mathbb{K}$,
\begin{align*}
& \left( \sum\limits_{j_1,\ldots,j_s=1}^{n}\left( \sum\limits_{j_{s+1}=1}^{n}\left\vert T_{s+1}\left( e_{j_1},\ldots,e_{j_{s+1}}\right) \right\vert ^{2}\right) ^{\frac{1}{2}\cdot\frac{1}{1-\left|\frac{1}{\mathbf{p}}\right|_{\leq s}}}\right)^{1-\left|\frac{1}{\mathbf{p}}\right|_{\leq s}} \\
&\leq \left(\sum\limits_{j_1,\ldots,j_s=1}^{n} A_{\mathbb K ,\frac{1}{1-\left|\frac{1}{\mathbf{p}}\right|_{\leq s}}}^{-1}\left(\int_{0}^{1}\left\vert\sum\limits_{j_{s+1}=1}^{n}T_{s+1}\left( e_{j_1},\ldots,e_{j_{s+1}}\right)r_{j_{s+1}}(t)\right\vert ^{\frac{1}{1-\left|\frac{1}{\mathbf{p}}\right|_{\leq s}}}dt\right) ^{\frac{1-\left|\frac{1}{\mathbf{p}}\right|_{\leq s}}{1-\left|\frac{1}{\mathbf{p}}\right|_{\leq s}}}\right)^{1-\left|\frac{1}{\mathbf{p}}\right|_{\leq s}} \\
& = A_{\mathbb K ,\frac{1}{1-\left|\frac{1}{\mathbf{p}}\right|_{\leq s}}}^{-1}\left(\int_{0}^{1}\sum\limits_{j_1,\ldots,j_s=1}^{n} \left\vert T_{s+1}\left( e_{j_1},\ldots,e_{j_s},\sum\limits_{j_{s+1}=1}^{n} e_{j_{s+1}}r_{j_{s+1}}(t)\right)\right\vert ^{\frac{1}{1-\left|\frac{1}{\mathbf{p}}\right|_{\leq s}}}dt\right)^{1-\left|\frac{1}{\mathbf{p}}\right|_{\leq s}} \\
& \leq A_{\mathbb K ,\frac{1}{1-\left|\frac{1}{\mathbf{p}}\right|_{\leq s}}}^{-1}\left(\sup_{t\in[0,1]}\sum\limits_{j_1,\ldots,j_s=1}^{n} \left\vert T_{s+1}\left( e_{j_1},\ldots,e_{j_s},\sum\limits_{j_{s+1}=1}^{n} e_{j_{s+1}}r_{j_{s+1}}(t)\right)\right\vert ^{\frac{1}{1-\left|\frac{1}{\mathbf{p}}\right|_{\leq s}}}dt\right)^{1-\left|\frac{1}{\mathbf{p}}\right|_{\leq s}} \\
& = A_{\mathbb K ,\frac{1}{1-\left|\frac{1}{\mathbf{p}}\right|_{\leq s}}}^{-1}\sup_{t\in[0,1]}\left(\sum\limits_{j_1,\ldots,j_s=1}^{n} \left\vert T_{s+1}\left( e_{j_1},\ldots,e_{j_s},\sum\limits_{j_{s+1}=1}^{n} e_{j_{s+1}}r_{j_{s+1}}(t)\right)\right\vert ^{\frac{1}{1-\left|\frac{1}{\mathbf{p}}\right|_{\leq s}}}dt\right)^{1-\left|\frac{1}{\mathbf{p}}\right|_{\leq s}} \\
& \leq A_{\mathbb K ,\frac{1}{1-\left|\frac{1}{\mathbf{p}}\right|_{\leq s}}}^{-1}2^{(s-1)\left[1-\left|\frac{1}{\mathbf{p}}\right|_{\leq s}\right]}\sup_{t\in[0,1]}\left\Vert T_{s+1}\left( \cdot ,\ldots,\cdot,\sum\limits_{j_{s+1}=1}^{n} e_{j_{s+1}}r_{j_{s+1}}(t)\right)\right\Vert \\
& = A_{\mathbb K ,\frac{1}{1-\left|\frac{1}{\mathbf{p}}\right|_{\leq s}}}^{-1}2^{(s-1)\left[1-\left|\frac{1}{\mathbf{p}}\right|_{\leq s}\right]}\|T_{s+1}\|,
\end{align*}
where $A_{\mathbb K ,\frac{1}{1-\left|\frac{1}{\mathbf{p}}\right|_{\leq s}}}$ is the constant of the Khinchine inequality.

Since 
$$
\frac{1}{1-\left|\frac{1}{\mathbf{p}}\right|_{\leq s}}\geq 2,
$$
we have $A_{\mathbb K ,\frac{1}{1-\left|\frac{1}{\mathbf{p}}\right|_{\leq s}}}=1$ and thus (from the previous inequality together with canonical inclusion of $\ell_p$ spaces)
\begin{align*}
&\left( \sum\limits_{j_1,\ldots,j_{s+1}=1}^{n} \left\vert T_{s+1}\left( e_{j_1},\ldots,e_{j_{s+1}}\right) \right\vert ^{\frac{1}{1-\left|\frac{1}{\mathbf{p}}\right|_{\leq s}}}\right)^{1-\left|\frac{1}{\mathbf{p}}\right|_{\leq s}}\\
&=\left( \sum\limits_{j_1,\ldots,j_s=1}^{n}\left( \sum\limits_{j_{s+1}=1}^{n}\left\vert T_{s+1}\left( e_{j_1},\ldots,e_{j_{s+1}}\right) \right\vert ^{\frac{1}{1-\left|\frac{1}{\mathbf{p}}\right|_{\leq s}}}\right) ^{\left(1-\left|\frac{1}{\mathbf{p}}\right|_{\leq s}\right)\cdot\frac{1}{1-\left|\frac{1}{\mathbf{p}}\right|_{\leq s} }}\right)^{1-\left|\frac{1}{\mathbf{p}}\right|_{\leq s}}\\
& \leq \left( \sum\limits_{j_1,\ldots,j_s=1}^{n}\left( \sum\limits_{j_{s+1}=1}^{n}\left\vert T_{s+1}\left( e_{j_1},\ldots,e_{j_{s+1}}\right) \right\vert ^{2}\right) ^{\frac{1}{2}\cdot\frac{1}{1-\left|\frac{1}{\mathbf{p}}\right|_{\leq s}}}\right)^{1-\left|\frac{1}{\mathbf{p}}\right|_{\leq s}} \\
& \leq 2^{(s-1)\left[1-\left|\frac{1}{\mathbf{p}}\right|_{\leq s}\right]}\|T_{s+1}\|,
\end{align*}
for every $n$ and all $(s+1)$-linear forms $T_{s+1}:\ell _{p_{1}}^{n}\times \cdots \times\ell_{p_{s}}^{n}\times\ell_{\infty}^{n} \rightarrow \mathbb{K}$. Using the canonical isometric isomorphisms for the spaces of weakly summable sequences (see \cite[Proposition 2.2]{diestel}) we know that this is equivalent to assert that (see \cite[p. 308]{dimant}),
\[
\Pi_{\left(\frac{1}{1-\left|\frac{1}{\mathbf{p}}\right|_{\leq s}};p_1^* ,\ldots,p_s^*,1\right)}^{s+1}(E_1,\ldots,E_{s+1};\mathbb{K})=\mathcal{L}(E_1,\ldots,E_{s+1};\mathbb{K})
\]
for all Banach spaces $E_1,\ldots,E_{s+1}$.

From Theorem \ref{ALBLIZZ} it is possible to prove that
\[
\Pi_{\left(\frac{1}{1-\left|\frac{1}{\mathbf{p}}\right|_{\leq s}};p_1^* ,\ldots,p_s^*,1\right)}^{s+1}(E_1,\ldots,E_{s+1};\mathbb{K})\subseteq \Pi_{\left(\frac{1}{1-\left|\frac{1}{\mathbf{p}}\right|_{\leq s+1}};p_1^* ,\ldots,p_{s+1}^*\right)}^{s+1}(E_1,\ldots,E_{s+1};\mathbb{K}).
\]
Consequently,
\[
\Pi_{\left(\frac{1}{1-\left|\frac{1}{\mathbf{p}}\right|_{\leq s+1}};p_1^* ,\ldots,p_{s+1}^*\right)}^{s+1}(E_1,\ldots,E_{s+1};\mathbb{K})=\mathcal{L}(E_1,\ldots,E_{s+1};\mathbb{K})
\]
for all Banach spaces $E_1,\ldots,E_{s+1}$. Again (see \cite[p. 308]{dimant}), this is equivalent to say that
\begin{align*}
&\left( \sum\limits_{j_1,\ldots,j_{s+1}=1}^{n} \left\vert T_{s+1}\left( e_{j_1},\ldots,e_{j_{s+1}}\right) \right\vert ^{\frac{1}{1-\left|\frac{1}{\mathbf{p}}\right|_{\leq s+1}}}\right)^{1-\left|\frac{1}{\mathbf{p}}\right|_{\leq s+1}}\\
& \leq 2^{(s-1)\left[1-\left|\frac{1}{\mathbf{p}}\right|_{\leq s}\right]}\|T_{s+1}\|,
\end{align*}
for all $(s+1)$-linear forms $T_{s+1}:\ell _{p_{1}}^{n}\times \cdots \times\ell_{p_{s}}^{n}\times\ell_{\infty}^{n} \rightarrow \mathbb{K}$ and all positive integers $n$.

The proof is completed by a standard induction argument.
\end{proof}

Just making $s=1$ in the previous result, we get the following Hardy--Littlewood type inequalities with constant $1$:

\begin{corollary}\label{corolario}
Let $m\geq 2$ be a positive integer and $\mathbf{p}=(p_{1},\ldots ,p_{m})\in (1,\infty]^m$ be such that $1<p_i\leq 2 <p_1,\ldots,p_{i-1},p_{i+1},\ldots,p_m$ for some $1\leq i\leq m$ and
\[
\frac{1}{2}\leq \left|\frac{1}{\mathbf{p}}\right|<1.
\]
Then
\[
\left( \sum_{j_{1},\ldots,j_{=m}}^{n}|T(e_{j_{1}},\ldots,e_{j_{m}})|^{\frac{1}{1-\left|\frac{1}{\mathbf{p}}\right|}}\right) ^{1-\left|\frac{1}{\mathbf{p}}\right|}\leq \Vert T\Vert,
\]
for all $m$-linear forms $T:\ell _{p_{1}}^{n}\times \cdots \times \ell_{p_{m}}^{n}\rightarrow \mathbb{K}$ and all positive integers $n$.
\end{corollary}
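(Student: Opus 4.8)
The plan is to prove the estimate by induction on the number of factors, starting from a collection of factors that realises the minimum defining $s$. For notational convenience relabel the indices so that the minimizing subset is $\{p_{1},\dots,p_{s}\}$, i.e. $\frac{1}{2}\leq\left|\frac{1}{\mathbf{p}}\right|_{\leq s}<1$. Applying Theorem~\ref{theor3.3} to $s$-linear forms on $\ell_{p_{1}}^{n}\times\cdots\times\ell_{p_{s}}^{n}$ gives the Hardy--Littlewood inequality there with exponent $\frac{1}{1-\left|\frac{1}{\mathbf{p}}\right|_{\leq s}}$ and constant $K:=2^{(s-1)\left[1-\left|\frac{1}{\mathbf{p}}\right|_{\leq s}\right]}$. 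I will then prove by induction on $\sigma\in\{s,\dots,m\}$ the following statement: every $\sigma$-linear form $T:\ell_{p_{1}}^{n}\times\cdots\times\ell_{p_{\sigma}}^{n}\to\mathbb{K}$ satisfies the Hardy--Littlewood inequality with exponent $\frac{1}{1-\left|\frac{1}{\mathbf{p}}\right|_{\leq\sigma}}$ and the \emph{same} constant $K$. The case $\sigma=m$ is precisely the theorem, since then the exponent is $\frac{1}{1-\left|\frac{1}{\mathbf{p}}\right|}$ and $K=2^{(s-1)\left[1-\left(\frac{1}{p_{k_1}}+\cdots+\frac{1}{p_{k_s}}\right)\right]}$ in the original indices.

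For the inductive step, assume the statement at stage $\sigma$. First I insert an $\ell_{\infty}^{n}$ slot: given a $(\sigma+1)$-linear form $S$ on $\ell_{p_{1}}^{n}\times\cdots\times\ell_{p_{\sigma}}^{n}\times\ell_{\infty}^{n}$, apply the Khinchine inequality in the last variable to the scalars $S(e_{j_{1}},\dots,e_{j_{\sigma}},e_{j_{\sigma+1}})$, use that $\bigl\|\sum_{j}r_{j}(t)e_{j}\bigr\|_{\ell_{\infty}^{n}}=1$ for every $t$, together with Fubini, and bound the resulting integral by the supremum over $t$ of the stage-$\sigma$ inequality applied to $(x_{1},\dots,x_{\sigma})\mapsto S\bigl(x_{1},\dots,x_{\sigma},\sum_{j}r_{j}(t)e_{j}\bigr)$, whose norm is at most $\|S\|$. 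The decisive point is that $\frac{1}{1-\left|\frac{1}{\mathbf{p}}\right|_{\leq\sigma}}\geq\frac{1}{1-\left|\frac{1}{\mathbf{p}}\right|_{\leq s}}\geq 2$ because $\left|\frac{1}{\mathbf{p}}\right|_{\leq s}\geq\frac{1}{2}$, so the relevant Khinchine constant equals $1$; combining this with the norm-one inclusion $\ell_{2}^{n}\hookrightarrow\ell_{1/(1-|\frac{1}{\mathbf{p}}|_{\leq\sigma})}^{n}$ one obtains the Hardy--Littlewood inequality with exponent $\frac{1}{1-\left|\frac{1}{\mathbf{p}}\right|_{\leq\sigma}}$ and constant $K$ for \emph{all} $(\sigma+1)$-linear forms on $\ell_{p_{1}}^{n}\times\cdots\times\ell_{p_{\sigma}}^{n}\times\ell_{\infty}^{n}$.

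Next I move to the language of multiple summing operators. By the canonical identification (the canonical basis of $\ell_{t}^{n}$ has weak $\ell_{t^{*}}$-norm $1$; see \cite[Proposition~2.2]{diestel} and \cite[p.~308]{dimant}) the previous inequality is equivalent to $\Pi_{\left(\frac{1}{1-|\frac{1}{\mathbf{p}}|_{\leq\sigma}};p_{1}^{*},\dots,p_{\sigma}^{*},1\right)}^{\sigma+1}(E_{1},\dots,E_{\sigma+1};\mathbb{K})=\mathcal{L}(E_{1},\dots,E_{\sigma+1};\mathbb{K})$ with the identity of norm $\leq K$, for all Banach spaces $E_{1},\dots,E_{\sigma+1}$. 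Now apply Theorem~\ref{ALBLIZZ} with source summing exponents $(p_{1}^{*},\dots,p_{\sigma}^{*},1)$ and target summing exponents $(p_{1}^{*},\dots,p_{\sigma+1}^{*})$: since $p_{\sigma+1}^{*}\geq 1$ the monotonicity hypothesis $q_{k}\geq p_{k}$ holds, the positivity hypothesis reduces to $\left|\frac{1}{\mathbf{p}}\right|_{\leq\sigma+1}<1$ (true because $\left|\frac{1}{\mathbf{p}}\right|_{\leq\sigma+1}\leq\left|\frac{1}{\mathbf{p}}\right|<1$), and a direct computation of the exponents $s_{k}$ in Theorem~\ref{ALBLIZZ} shows that they are all equal to $\frac{1}{1-|\frac{1}{\mathbf{p}}|_{\leq\sigma+1}}$. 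As the inclusion in Theorem~\ref{ALBLIZZ} has norm $1$, we obtain $\Pi_{\left(\frac{1}{1-|\frac{1}{\mathbf{p}}|_{\leq\sigma+1}};p_{1}^{*},\dots,p_{\sigma+1}^{*}\right)}^{\sigma+1}=\mathcal{L}$ with norm $\leq K$, which translates back, by the same identification, to the Hardy--Littlewood inequality with exponent $\frac{1}{1-|\frac{1}{\mathbf{p}}|_{\leq\sigma+1}}$ and constant $K$ for $(\sigma+1)$-linear forms on $\ell_{p_{1}}^{n}\times\cdots\times\ell_{p_{\sigma+1}}^{n}$. This is the stage-$(\sigma+1)$ statement, and iterating from $\sigma=s$ up to $\sigma=m$ proves the theorem.

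Note that the constant does not deteriorate along the induction precisely because both the Khinchine constant (for exponents $\geq 2$) and the Albuquerque--Rezende inclusion constant are $1$; the only constant $>1$ enters through the base case, Theorem~\ref{theor3.3}. I expect the main obstacle to be the bookkeeping inside the inductive step, namely checking that a single application of Theorem~\ref{ALBLIZZ} simultaneously converts the trivial last-slot exponent $1$ into $p_{\sigma+1}^{*}$ and raises the common outer exponent from $\frac{1}{1-|\frac{1}{\mathbf{p}}|_{\leq\sigma}}$ to $\frac{1}{1-|\frac{1}{\mathbf{p}}|_{\leq\sigma+1}}$ while keeping all output exponents equal, together with the careful passage back and forth between the coefficient form and the operator-ideal form of the inequalities.
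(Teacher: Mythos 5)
Your argument is correct and is essentially the paper's own route: the paper obtains this corollary by setting $s=1$ in its main theorem, whose proof is exactly the induction you describe --- Khinchine in an appended $\ell_{\infty}^{n}$ slot with constant $1$ because the exponent $\frac{1}{1-\left|\frac{1}{\mathbf{p}}\right|_{\leq\sigma}}\geq 2$, followed by the norm-one Albuquerque--Rezende inclusion (Theorem \ref{ALBLIZZ}) to upgrade the last summing exponent from $1$ to $p_{\sigma+1}^{*}$ and the outer exponent to $\frac{1}{1-\left|\frac{1}{\mathbf{p}}\right|_{\leq\sigma+1}}$. The only cosmetic point is that for $s=1$ the base case is not literally Theorem \ref{theor3.3} (which is stated for $m\geq 2$) but the trivial duality $(\ell_{p}^{n})^{*}=\ell_{p^{*}}^{n}$ with constant $1$ --- a gloss the paper's own proof shares.
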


Corollary \ref{corolario} generalizes a recently result proved independently in \cite[Corollary 3.3]{anps}. Our approach is different and we believe it is more self-contained.

\begin{bibdiv}

\begin{biblist}

\bib{UNIV}{article}{
	author={Albuquerque, N.},
	author={Ara\'ujo, G.},
	author={Maia, M.},
	author={Nogueira, T.},
	author={Pellegrino, D.},
	author={Santos, J.},
	title={Optimal Hardy--Littlewood inequalities niformly bounded by a universal constant},
	journal={Annales Mathématiques Blaise Pascal, to appear},
	volume={},
	number = {},
	pages = {},
	year = {},
	doi = {},
}

\bib{alb}{article}{
	author = {Albuquerque, N.},
	author = {Bayart, F.},
	author = {Pellegrino, D.},
	author = {Seoane--Sep\'{u}lveda, J.B.},
	title = {Sharp generalizations of the multilinear Bohnenblust--Hille inequality},
	journal = {J. Funct. Anal.},
	volume = {266},
	pages = {3726-3740},
	year = {2014},
}

\bib{n}{article}{
	author = {Albuquerque, N.},
	author = {Bayart, F.},
	author = {Pellegrino, D.},
	author = {Seoane--Sep\'{u}lveda, J.B.},
	title = {Optimal Hardy--Littlewood type inequalities for polynomials and multilinear operators},
	journal = {Israel J. Math.},
	volume = {211},
	number = {1},
	pages = {197-220},
	year = {2016},
}

\bib{ALBLIZ}{article}{
	author = {Albuquerque, N.}
	author = {Rezende, L.},
	title = {Anisotropic regularity principle in sequence spaces and applications},
	journal = {Communications in Contemporary Mathematics, to appear},
	volume = {},
	number = {},
	pages = {},
	year = {},
	doi = {10.1142/S0219199717500870},
}

\bib{ap2}{article}{
	author = {Ara\'ujo, G.},
	author = {Pellegrino, D.},
	title = {On the constants of the Bohnenblust-Hille and
		Hardy-Littlewood inequalities},
	journal = {Bull. Braz. Math. Soc. (N.S.)},
	volume = {48},
	number = {1},
	pages = {141-169},
	year = {2017},
}

\bib{ap}{article}{
	author = {Ara\'ujo, G.},
	author = {Pellegrino, D.},
	author = {Silva e Silva, D.D.},
	title = {On the upper bounds for the constants of the Hardy--Littlewood inequality},
	journal = {J. Funct. Anal.},
	volume = {267},
	pages = {1878-1888},
	year = {2014},
}

\bib{anps}{article}{
	author={Aron, R.},
	author={N\'u\~nez-Alarc\'on, D.},
	author={Pellegrino, D.},
	author={Serrano-Rodr\'iguez, D.},
	title={Optimal exponents for Hardy--Littlewood inequalities for $m$-linear operators},
	journal={Linear Algebra Appl.},
	volume={531},
	date={2017},
	pages={399--422},
}

\bib{bayart}{article}{
	author = {Bayart, F.},
	author = {Pellegrino, D.},
	author = {Seoane--Sep\'{u}lveda, J.B.},
	title = {The Bohr radius of the $n$--dimensional polydisc is equivalent to $\sqrt{(\log n)/n}$},
	journal = {Advances in Math.},
	volume = {264},
	pages = {726-746},
	year = {2014},
}

\bib{bh}{article}{
	author={Bohnenblust, H.F.},
	author={Hille, E.},
	title={On the absolute convergence of Dirichlet series},
	journal={Ann. of Math. (2)},
	volume={32},
	date={1931},
	number={3},
	pages={600--622},
}

\bib{diestel}{book}{
	author={Diestel, J.},
	author={Jarchow, H.},
	author={Tonge, A.},
	title={Absolutely summing operators},
	series={Cambridge Studies in Advanced Mathematics},
	volume={43},
	publisher={Cambridge University Press, Cambridge},
	date={1995},
}

\bib{dimant}{article}{
	author={Dimant, A.},
	author={Sevilla-Peris, P.},
	title={Summation of Coefficients of Polynomials on $\ell_p$ Spaces},
	journal={Publ. Mat.},
	volume={60},
	date={2016},
	number={2},
	pages={289--310},
}

\bib{hl}{article}{
	author={Hardy, G.},
	author={Littlewood, J.E.},
	title={Bilinear forms bounded in space $[p,q]$},
	journal={Quart. J. Math.},
	volume={5},
	date={1934},
	pages={241--254},
}

\bib{LLL}{article}{
	author={Littlewood, J.E.},
	title={On bounded bilinear forms in an infinite number of variables},
	journal={Quart. J. (Oxford Ser.)},
	volume={1},
	date={1930},
	pages={164--174},
}

\bib{Montanaro}{article}{
	author={Montanaro, A.},
	title={Some applications of hypercontractive inequalities in quantum information theory},
	journal={J. Math. Physics},
	volume={53},
	number={12},
	date={2012},
	pages={122206},
}

\bib{PELLEGRINO2017802}{article}{,
	author={Pellegrino, D.},
	author={Santos, J.},
	author={Serrano-Rodr\'iguez, D.},
	author={Teixeira, E.},
	title={A regularity principle in sequence spaces and applications},
	journal={Bull. Sci. Math.},
	volume={141},
	number={8},
	date={2017},
	pages={802--837},
}

\bib{pi1}{article}{
	author={Pietsch, A.},
	title={Ideals of multilinear functionals},
	journal={Proceedings of the Second International Conference on Operator Algebras, Ideals and Their Applications in Theoretical Physics, Teubner--texte Math.},
	volume={67},
	date={1983},
	pages={185--199},
}

\bib{pra}{article}{
	author={Praciano-Pereira, T.},
	title={On bounded multilinear forms on a class of $l^{p}$ spaces},
	journal={J. Math. Anal. Appl.},
	volume={81},
	date={1981},
	number={2},
	pages={561--568},
	doi={10.1016/0022-247X(81)90082-2},
}
\end{biblist}
\end{bibdiv}

\end{document}